\numberwithin{figure}{section}
\theoremstyle{plain}
\newtheorem{thm}{\protect\theoremname}
\theoremstyle{plain}
\newtheorem{fact}[thm]{\protect\factname}
\theoremstyle{plain}
\newtheorem{prop}[thm]{\protect\propositionname}
\theoremstyle{plain}
\newtheorem{lem}[thm]{\protect\lemmaname}
\providecommand{\factname}{Fact}
\providecommand{\lemmaname}{Lemma}
\providecommand{\propositionname}{Proposition}
\providecommand{\theoremname}{Theorem}
\begin{document}
\title[Obstruction to some Higman embedding theorem]{Obstruction to a Higman embedding theorem for residually finite groups
with solvable word problem}
\begin{abstract}
We prove that, for a finitely generated residually finite group, having
solvable word problem is not a sufficient condition to be a subgroup
of a finitely presented residually finite group. The obstruction is
given by a residually finite group with solvable word problem for
which there is no effective method that allows, given some non-identity
element, to find a morphism onto a finite group in which
this element has a non-trivial image. We also prove that the depth
function of this group grows faster than any recursive function. 
\end{abstract}

\author{Emmanuel Rauzy}
\curraddr{Université de Paris. UFR de Mathématiques. Bâtiment Sophie Germain.
8 place Aurélie Nemours, 75013 Paris, France}
\email{emmanuel.rauzy.14@normalesup.org}
\maketitle

\section*{Introduction }

It is well known that there can be no Higman embedding theorem for
recursively presented finitely generated residually finite groups,
that is to say, that not all finitely generated recursively presented
residually finite groups embed into finitely presented residually
finite groups. Indeed, a theorem of McKinsey (\cite{McKinsey1943,Mostowski1966,Dyson64})
states that all finitely presented residually finite groups have solvable
word problem, while on the other hand several recursively presented
residually finite groups are known, that fail to have solvable word
problem: for instance one example was constructed by Meskin in \cite{Meskin1974},
and one by Dyson in \cite{Dyson1974}. 

It was unknown whether the condition of having solvable word problem
is sufficient for such embeddings to exist. For instance, in the article
\cite{Kharlampovich2017}, the authors ask whether ``unsolvability
of the word problem is the only obstacle'' to embed recursively presented
residually finite groups into finitely presented residually finite
groups. We answer negatively to that question:
\begin{thm}
\label{thm:FirstThm}There exists a finitely generated residually
finite group with solvable word problem, that does not embed in any
finitely presented residually finite group. 
\end{thm}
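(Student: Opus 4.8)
The plan is to isolate a recursion-theoretic property that every subgroup of a finitely presented residually finite group must possess, and then to construct by hand a finitely generated residually finite group with solvable word problem that fails this property. The relevant property is what one may call \emph{effective residual finiteness}: the existence of a recursively enumerable family of homomorphisms onto finite groups that separates every non-identity element, equivalently a partial algorithm that, on input a word $w$ representing a non-trivial element, halts and outputs a finite quotient in which the image of $w$ is non-trivial.

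First I would record the easy direction, that this property is necessary. Suppose $G$ is finitely generated and embeds into a finitely presented residually finite group $H$. Because $H$ is finitely presented, one can effectively enumerate all homomorphisms from $H$ onto finite groups: a map defined on the finite generating set extends to a homomorphism if and only if it kills each of the finitely many relators, a decidable condition. Residual finiteness ensures that this recursively enumerable family separates the non-identity elements of $H$, and intersecting the corresponding finite-index normal subgroups with $G$ yields a recursively enumerable, point-separating family of finite quotients of $G$, each of computably bounded order. In particular $G$ inherits solvable word problem, and by running the enumeration on each non-identity word of length at most $n$ one obtains a \emph{recursive} upper bound for the depth function of $G$ — the least order of a finite quotient needed to separate a given element, maximised over elements of bounded length. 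Thus, if the depth function of a finitely generated residually finite group with solvable word problem grows faster than every recursive function, that group cannot embed into any finitely presented residually finite group.

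It remains to construct such a group, and this is the main obstacle. I would look for a finitely generated $G$ realised as a subdirect product of a recursively described sequence of finite, preferably simple, groups $S_n$, so that residual finiteness is automatic, engineered so that two competing features hold simultaneously. The word problem should be \emph{local}: whether a word $w$ is trivial should be decidable by inspecting only a computably bounded number of the coordinates $S_n$, which keeps the word problem solvable. At the same time, \emph{separating} a non-identity element should be forced to occur in a coordinate $S_n$ whose index and whose defining homomorphism $G \to S_n$ cannot be produced effectively, because their size is tied to a function dominating every recursive function, such as the settling time of a universal machine. The delicate point is to reconcile these requirements: enough computability that triviality can always be certified locally, yet enough concealed complexity that exhibiting separating quotients of short elements would decide an undecidable problem, so that the depth function outgrows every recursive function. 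Once such a $G$ is built, the necessary condition established above is contradicted, and $G$ embeds into no finitely presented residually finite group, which is the theorem.
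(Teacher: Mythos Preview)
Your reduction is sound and matches the paper's: finitely generated subgroups of finitely presented residually finite groups inherit effective residual finiteness (equivalently, a recursive bound on the depth function), so it suffices to build a finitely generated residually finite group with solvable word problem lacking this property. The paper records exactly this as Facts~\ref{fact:3} and~\ref{fact:4}, and derives the depth-function reformulation in the final section.

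The gap is in the construction. You propose a subdirect product $G \leq \prod_n S_n$ in which triviality of a word is decidable by inspecting a computably bounded set of coordinates, while the separating coordinate for a non-trivial word cannot be found effectively. These two requirements conflict as stated: to ``inspect'' coordinate $n$ is to evaluate the projection $\pi_n$ on the generators, and if $w \neq e$ then one of the computably many inspected projections already has $\pi_n(w) \neq e$, handing you a separating finite quotient. So local decidability of the word problem in your sense \emph{forces} effective residual finiteness. For the plan to work you would need a mechanism for solving the word problem that never passes through computing any coordinate projection, and the proposal does not indicate one; the ``delicate point'' you flag is in fact an internal contradiction of the sketch, not merely a technicality to be filled in.

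The paper avoids this trap by using a different carrier group. It takes Dyson's groups $L(\mathcal{A})$, amalgams of two lamplighter groups over the subgroup indexed by a set $\mathcal{A} \subseteq \mathbb{Z}$. Via amalgam normal forms the word problem reduces to membership in $\mathcal{A}$; residual finiteness is equivalent to $\mathcal{A}$ being closed in the profinite topology on $\mathbb{Z}$; and effective residual finiteness is equivalent to $\mathcal{A}$ being \emph{effectively} closed. The core of the proof is then a purely recursion-theoretic lemma: there exists a recursive, profinitely closed subset of $\mathbb{Z}$ whose complement has no computable modulus of openness, obtained by encoding the halting behaviour of the $n$-th Turing machine into the radii of balls around a chosen integer $t_n$. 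This cleanly decouples decidability of membership in $\mathcal{A}$ (hence the word problem) from effective access to the finite quotients, which is precisely the decoupling your subdirect-product outline asserts but does not supply.
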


Call a group $G$ \emph{effectively residually finite }if there is
an algorithm that takes a word $w$ on the generators of $G$ as input,
and, if $w\neq e$ in $G$, produces a morphism $\varphi$ from $G$
to a finite group $F$ that satisfies $\varphi(w)\neq e$. What we
actually prove here is: 
\begin{thm}
\label{thm:Main-THM}There exists a finitely generated residually
finite group with solvable word problem, that is not effectively residually
finite. 
\end{thm}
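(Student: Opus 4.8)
The plan is to construct a finitely generated residually finite group whose word problem is solvable, but for which the set of elements admitting finite quotients separating them from the identity cannot be enumerated effectively in a uniform way. The key tension to exploit is the contrast between two algorithmic tasks: deciding whether a word equals the identity (the word problem), and, for a word that is \emph{not} the identity, producing a witnessing finite quotient (effective residual finiteness). For a residually finite group with solvable word problem, one can always semi-decide ``$w \neq e$'' by searching through all finite quotients until a separating one is found; this search halts precisely because residual finiteness guarantees a witness exists. The point is that solvability of the word problem gives a second, independent semi-decision procedure for ``$w = e$,'' and together these would make the group effectively residually finite only if the search for witnesses could be bounded. So the goal is to break the link between knowing $w \neq e$ and being able to exhibit the certificate.

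First I would look for an arithmetic or recursion-theoretic invariant to encode into the group. A natural approach is to pick a recursively enumerable but non-recursive set $S \subseteq \mathbb{N}$ (for instance the halting set) and build a group in which the behaviour of finite quotients on certain canonical elements $g_n$ encodes membership in $S$. The construction should arrange that the word problem remains solvable --- this typically means presenting the group so that equality of words reduces to a decidable combinatorial normal-form computation, independent of the hard set $S$. Residual finiteness must be retained as an abstract property; a standard device is to build the group as an inverse-limit-friendly object, for example a suitable subgroup of a direct product of finite or residually finite pieces, or via a wreath-product or lamplighter-type construction where finite quotients are transparent but their \emph{discovery} for a given element requires solving an undecidable problem.

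The central step is to verify the negative statement: that no algorithm can, given a word $w$ with $w \neq e$, output a finite quotient separating $w$ from $e$. I would prove this by contradiction, showing that such an algorithm would let one decide the non-recursive set $S$. Concretely, if the $g_n$ are designed so that the smallest separating quotient for $g_n$ (or the mere existence of a separating quotient within a computable bound) determines whether $n \in S$, then an effective residual finiteness procedure would compute this bound and hence decide $S$, a contradiction. This is where the depth-function statement from the abstract enters: the depth function measures how large a finite quotient one must pass to in order to separate an element of given length, and making it grow faster than every recursive function is exactly the quantitative obstruction that defeats any would-be effective procedure.

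I expect the main obstacle to be the simultaneous satisfaction of three competing constraints: \emph{(i)} solvability of the word problem, \emph{(ii)} genuine residual finiteness, and \emph{(iii)} non-effectiveness of the residual-finiteness witnesses. Any naive encoding that makes witnesses hard to find tends to also corrupt the word problem or destroy residual finiteness, so the delicate part is decoupling these. The right framework is likely a careful choice of a recursively presented group together with a proof that its word problem is solvable via an explicit normal form, combined with an argument that the map $n \mapsto (\text{least finite quotient detecting } g_n)$ dominates all recursive functions. Establishing that domination rigorously --- rather than merely non-computability --- is the technical heart, and I would devote the most care to making the encoding robust enough that the depth function provably outgrows any fixed recursive function.
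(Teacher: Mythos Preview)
Your high-level strategy is sound and parallels the paper's: encode the halting problem into the search for separating finite quotients, using a lamplighter-type construction so that the word problem stays solvable via normal forms, and argue that an effective residual-finiteness procedure would decide the halting set. The paper does exactly this, and your observation that the depth function must outgrow every recursive function is also established there as a corollary.

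What your outline lacks is the concrete reduction that makes the ``delicate decoupling'' you worry about tractable. The paper's key move is to use Dyson's groups $L(\mathcal{A})$, amalgamated products of two lamplighter groups identified along a subset $\mathcal{A}\subseteq\mathbb{Z}$, and to prove (Proposition~\ref{prop:MainProp}) that the three competing constraints translate \emph{exactly} into three properties of $\mathcal{A}$ relative to the profinite topology on $\mathbb{Z}$: solvable word problem $\Leftrightarrow$ $\mathcal{A}$ recursive; residually finite $\Leftrightarrow$ $\mathcal{A}$ closed; effectively residually finite $\Leftrightarrow$ $\mathcal{A}$ effectively closed. This collapses the group-theoretic problem to a purely computable-topology problem: exhibit a recursive, closed, but not effectively closed subset of $\mathbb{Z}$ (Lemma~\ref{lem:MainLemma}).

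One point where your framing could lead you astray: you propose to encode an r.e.\ non-recursive set $S$ directly. In the paper's construction the encoded set $\mathcal{A}$ must itself be \emph{recursive} --- otherwise the word problem fails --- and the undecidability is pushed instead into the failure of \emph{effective} closedness: one can decide membership in $\mathcal{A}$, but given $x\notin\mathcal{A}$ one cannot compute a modulus $p$ with $(x+p\mathbb{Z})\cap\mathcal{A}=\varnothing$. The halting problem is hidden in the radius of the largest profinite ball around certain points avoiding $\mathcal{A}$, not in membership. Your plan would need to be adjusted to reflect this: the hard set is not the one you build the group from, but a derived quantity (ball radii, equivalently quotient sizes) extracted from it.
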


Theorem \ref{thm:FirstThm} then follows from the following facts: 
\begin{fact}
\label{fact:3}A finitely presented residually finite group is effectively
residually finite. 
\begin{fact}
\label{fact:4}A finitely generated subgroup of an effectively residually
finite group is itself effectively residually finite. 
\end{fact}

\end{fact}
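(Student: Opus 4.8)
The plan is to prove the statement directly, by pulling morphisms back along the inclusion. Let $G$ be effectively residually finite with finite generating set $S=\{g_{1},\dots,g_{n}\}$, and let $H\leq G$ be generated by finitely many elements $h_{1},\dots,h_{m}$. Since $H$ is a subgroup of $G$, I fix once and for all an expression of each $h_{j}$ as a word $w_{j}$ in $S^{\pm1}$; this finite list of words is exactly the data describing the inclusion $\iota\colon H\hookrightarrow G$.

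First I would set up the substitution map: given any word $u$ on $h_{1},\dots,h_{m}$, replacing each letter $h_{j}^{\pm1}$ by $w_{j}^{\pm1}$ produces a word $\widehat{u}$ on $S^{\pm1}$ naming the same group element, and this rewriting is plainly algorithmic. The point on which everything turns is that, because $H$ is a \emph{subgroup} of $G$, one has $u\neq e$ in $H$ if and only if $\widehat{u}\neq e$ in $G$. Hence, given a word $u$ with $u\neq e$ in $H$, I may feed $\widehat{u}$ to the effective-residual-finiteness algorithm for $G$; it halts and returns a morphism $\varphi\colon G\to F$ onto a finite group with $\varphi(\widehat{u})\neq e$.

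It remains to restrict. The composite $\varphi\circ\iota\colon H\to F$ can be computed explicitly, since its value on each generator $h_{j}$ is $\varphi(w_{j})$, evaluated along the word $w_{j}$ from the already-produced images $\varphi(g_{1}),\dots,\varphi(g_{n})$; corestricting to the (finite, computable) image $\varphi(H)$ makes it onto. By construction $(\varphi\circ\iota)(u)=\varphi(\widehat{u})\neq e$, which is precisely the witness required for $H$.

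I do not expect a genuine obstacle here: each step is a finite explicit computation, and the only thing to verify is that rewriting and restriction preserve ``names a non-identity element'', which is immediate from $H\leq G$. The single mild subtlety is the implicit assertion that effective residual finiteness does not depend on the chosen finite generating set; this is standard, following from the effective rewriting of words between any two finite generating sets of a fixed group, and I would record it as a preliminary remark before giving the argument above.
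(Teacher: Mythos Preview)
Your argument for Fact~\ref{fact:4} is correct and is exactly what the paper has in mind when it says the fact is ``straightforward'': rewrite the word along the inclusion, apply the algorithm for $G$, and restrict the resulting morphism to $H$. There is nothing to add.

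However, the statement block you were given also contains Fact~\ref{fact:3}, and your proposal does not address it at all. That fact is not a formality: one must explain why a \emph{finitely presented} residually finite group admits an algorithm producing, for each nontrivial word, a finite quotient witnessing nontriviality. The paper's justification is McKinsey's algorithm, equivalently the observation that a finitely presented group has computable finite quotients: one can effectively enumerate all finite groups $F$ together with all tuples in $F$ indexed by the generators, check which tuples satisfy the finitely many defining relations (hence define genuine homomorphisms $G\to F$), and among these search for one in which the given word has nontrivial image. Residual finiteness guarantees the search terminates. You should supply this argument, or at least cite it, to cover Fact~\ref{fact:3}.
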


Fact \ref{fact:3} follows from McKinsey's algorithm. It comes from
the fact that a finitely presented group has computable finite quotients,
see \cite{Rauzy2020}. Fact \ref{fact:4} is straightforward. 

In the first section, Dyson's groups are introduced, and they are
used to reduce the proof of Theorem \ref{thm:Main-THM} to a problem
about subsets of $\mathbb{Z}$. In the second section, this problem
is solved. In the last section, we show that the group constructed
in order to prove Theorem \ref{thm:Main-THM} has a depth function
that grows faster than any recursive function. 

Following \cite{Dyson1974}, throughout this article, recursively
presented groups will be called re groups (for recursively enumerable),
and groups in which there is an algorithm that recognizes non-trivial
elements will be called co-re groups. A group has solvable word problem
if and only if it is re and co-re. 

\section{Dyson's Groups }

The author already used Dyson's groups to investigate the property
of having computable finite quotients (\cite{Rauzy2020}). What we
construct here is a strengthening of the result obtained in that first
article, we will include here all definitions but omit the proofs
that already appear there. 

Dyson's groups are amalgamated products of two lamplighter groups. 

The lamplighter group $L$ is the wreath product of $\mathbb{Z}$
and $\mathbb{Z}/2\mathbb{Z}$, noted $\mathbb{Z}\wr\mathbb{Z}/2\mathbb{Z}$,
which is by definition the semi-direct product $\mathbb{Z}\ltimes\underset{\mathbb{Z}}{\bigoplus}\mathbb{Z}/2\mathbb{Z}$,
where $\mathbb{Z}$ acts on $\underset{\mathbb{Z}}{\bigoplus}\mathbb{Z}/2\mathbb{Z}$
by permuting the indices. It admits the following presentation:
\[
\langle a,\varepsilon\vert\,\varepsilon^{2},\,\left[\varepsilon,a^{i}\varepsilon a^{-i}\right],i\in\mathbb{Z}\rangle
\]
The element $a^{i}\varepsilon a^{-i}$ of $L$ corresponds to the
element of $\underset{\mathbb{Z}}{\bigoplus}\mathbb{Z}/2\mathbb{Z}$
with only one non-zero coordinate in position $i\in\mathbb{Z}$. We
call it $u_{i}$. Consider another copy $\hat{L}$ of the lamplighter
group, with an isomorphism we write $g\mapsto\hat{g}$. For each subset
$\mathcal{A}$ of $\mathbb{Z}$, define $L(\mathcal{A})$ to be the
amalgamated product of $L$ and $\hat{L}$, with $u_{i}=a^{i}\varepsilon a^{-i}$
identified with $\hat{u}_{i}=\hat{a}^{i}\hat{\varepsilon}\hat{a}^{-i}$
for each $i$ in $\mathcal{A}$. It has the following presentation:
\[
\langle a,\hat{a},\varepsilon,\hat{\varepsilon}\vert\,\varepsilon^{2},\,\hat{\varepsilon}^{2},\,\left[\varepsilon,a^{i}\varepsilon a^{-i}\right],\left[\hat{\varepsilon},\hat{a}^{i}\hat{\varepsilon}\hat{a}^{-i}\right],i\in\mathbb{Z},\,a^{j}\varepsilon a^{-j}=\hat{a}{}^{j}\hat{\varepsilon}\hat{a}{}^{-j},\,j\in\mathcal{A}\rangle
\]

Whether the group $L(\mathcal{A})$ is residually finite depends on
whether the set $\mathcal{A}$ is closed in the profinite topology
on $\mathbb{Z}$. The profinite topology on a group $G$, which we
denote $\mathcal{PT}(G)$, is the topology defined by taking cosets
of finite index normal subgroups as a basis for open sets. Thus a
subset $\mathcal{A}$ of $\mathbb{Z}$ is open in $\mathcal{PT}(\mathbb{Z})$
if and only if, for every $n$ in $\mathcal{A}$, there exists an
integer $p$ such that $n+p\mathbb{Z}\subseteq\mathcal{A}$. The profinite
topology on $\mathbb{Z}$ was rediscovered in 1955 by Furstenberg
in \cite{Furstenberg1955} (the profinite topology on an arbitrary
group was defined in 1950 in \cite{Hall1950}), where it is used to
give an elegant proof of the existence of infinitely many primes,
and it was proven in \cite{Lovas2010} that this topology comes from
a metric, which is given by the following formula: 

\[
\left\Vert x\right\Vert =\frac{1}{sup\left\{ n:\,1\vert x,2\vert x,3\vert x,...,n\vert x\right\} }
\]
\[
d(x,y)=\left\Vert x-y\right\Vert 
\]
The norm $\left\Vert x\right\Vert $ is thus the reciprocal of the
greatest integer $n$ with the property that $1$, $2$, ..., $n$
all divide $x$. A sequence converges to $0$ in that topology if
and only if, for any integer $k$, there exists a rank from which
on $k$ divides all terms of the sequence. For instance, $n!$ goes
to $0$ as $n$ goes to infinity. Define a function $\theta$ on the
natural numbers by $\theta(n)=\text{lcm}\left\{ 1,2,3...,n\right\} $.
$\theta(n)$ is the smallest non-zero natural number such that $\left\Vert \theta(n)\right\Vert \leq\frac{1}{n}$.
The closed ball of radius $\frac{1}{n}$ and centered in $x$, which
is the set $\left\{ y\in\mathbb{Z},d(x,y)\leq\frac{1}{n}\right\} $,
is simply the set $x+\theta(n)\mathbb{Z}$. It is in fact also open.
Call $\overline{B}(x,r)$ the closed ball centered in $x$ and of
radius $r$, and $B(x,r)$ the corresponding open ball (thought pay
attention that the latter is not the interior of the former). The
distance $d$ is in fact ultrametric: for $x$, $y$ and $z$ integers,
one has $d(x,z)\leq\max(d(x,y),d(y,z))$. This implies that each point
of a ball can be taken as its center, and thus that if two balls intersect,
one is contained in the other. 

Of course, the distance $d$ is effective: $d$ is a recursive function.
This implies that both the closed and open balls of $\mathcal{PT}(\mathbb{Z})$
are recursive sets. 

Call an open set $O$ \emph{effectively open }(in $\mathcal{PT}(\mathbb{Z})$)
if there is an effective procedure that, given any number $n$ in
$O$, computes some integer $p$ such that $n+p\mathbb{Z}$ is contained
in $O$. A set is \emph{effectively closed }if its complement is effectively
open. 

We can now state the properties of the group $L(\mathcal{A})$ that
will allow us to prove Theorem \ref{thm:Main-THM}. 
\begin{prop}
\label{prop:MainProp}Let $\mathcal{A}$ be a subset of $\mathbb{Z}$. 
\begin{enumerate}
\item $L(\mathcal{A})$ is re, co-re or has solvable word problem if and
only if $\mathcal{A}$ is respectively re, co-re or recursive.
\item $L(\mathcal{A})$ is residually finite if and only if $\mathcal{A}$
is closed in $\mathcal{PT}(\mathbb{Z})$. 
\item $L(\mathcal{A})$ is effectively residually finite if and only if
$\mathcal{A}$ is co-re and effectively closed in $\mathcal{PT}(\mathbb{Z})$. 
\end{enumerate}
\end{prop}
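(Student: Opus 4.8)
The plan is to take parts (1) and (2) as given (they are established in prior work) and to prove only (3). Throughout I use the normal form in the amalgam $L(\mathcal{A})=L*_C\hat{L}$, where $C=\langle u_j:j\in\mathcal{A}\rangle$ is the subgroup of the $\mathbb{F}_2$-vector space $\bigoplus_\mathbb{Z}\mathbb{Z}/2\mathbb{Z}$ spanned by the basis lamps; in particular, for $n\notin\mathcal{A}$ the lamp $u_n$ lies outside $C$, so $u_n\hat{u}_n$ is a reduced word of length two and hence $u_n\hat{u}_n\neq e$. Two families of finite quotients will do all the work. The \emph{collapse quotients} $L(\mathcal{A})\to L_p:=\mathbb{Z}/p\mathbb{Z}\wr\mathbb{Z}/2\mathbb{Z}$ identify the two factors (sending $\hat{a}\mapsto a$, $\hat{\varepsilon}\mapsto\varepsilon$ and killing $a^p$); they are always valid and require no information about $\mathcal{A}$. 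The \emph{one-split quotients} are finite quotients of $L_p*_{C'}\hat{L}_p$ in which the two shifts remain distinct and $C'$ identifies all lamp residues except a single residue $r$; such a quotient is legitimate precisely when $r+p\mathbb{Z}$ is disjoint from $\mathcal{A}$, since then every relation $u_j=\hat{u}_j$ $(j\in\mathcal{A})$ is respected.

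For necessity, suppose $L(\mathcal{A})$ is effectively residually finite. Running the separation algorithm gives a semi-decision procedure for nontriviality, so $L(\mathcal{A})$ is co-re, whence $\mathcal{A}$ is co-re by part (1); and effective residual finiteness forces residual finiteness, so $\mathcal{A}$ is closed by part (2). To see that $\mathcal{A}$ is effectively closed, I feed the word $u_n\hat{u}_n$ (nontrivial for $n\notin\mathcal{A}$) to the algorithm and obtain an explicit morphism $\varphi:L(\mathcal{A})\to F$ onto a finite group with $\varphi(u_n)\neq\varphi(\hat{u}_n)$. Setting $P=\mathrm{lcm}(\mathrm{ord}\,\varphi(a),\mathrm{ord}\,\varphi(\hat{a}))$, the images $\varphi(u_j)=\varphi(a)^j\varphi(\varepsilon)\varphi(a)^{-j}$ and $\varphi(\hat{u}_j)$ depend only on $j$ modulo $P$; so if some $j\in\mathcal{A}$ satisfied $j\equiv n\pmod P$, then the relation $\varphi(u_j)=\varphi(\hat{u}_j)$ would give $\varphi(u_n)=\varphi(\hat{u}_n)$, a contradiction. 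Hence $n+P\mathbb{Z}$ is disjoint from $\mathcal{A}$, and outputting $P$ witnesses effective openness of the complement.

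For sufficiency, assume $\mathcal{A}$ is co-re and effectively closed; by part (2) the group is residually finite, so the task is only to find a separating quotient effectively. Given $w\neq e$, I dovetail two searches: (A) enumerate the collapse quotients $L_p$ and their finite quotients, testing whether $w$ survives; and (B) enumerate $n\in\mathbb{Z}\setminus\mathcal{A}$ using co-reness, for each apply the effective-closedness procedure to produce a $p$ with $(n+p\mathbb{Z})\cap\mathcal{A}=\emptyset$, build the corresponding one-split amalgam $L_p*_{C'}\hat{L}_p$ (whose validity is guaranteed by the disjointness), and test $w$ in its finite quotients, using that an amalgam of two finite groups over a finite subgroup is virtually free and hence effectively residually finite. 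Every quotient produced is, by construction, a genuine quotient of $L(\mathcal{A})$, so any successful test certifies $w\neq e$ together with the required morphism onto a finite group; the search simply never halts when $w=e$, which is all that effective residual finiteness demands. The decisive point is that validity never requires deciding membership in $\mathcal{A}$: the collapse quotients are unconditional, and the one-split quotients rest only on an arithmetic progression that effective closedness certifies to avoid $\mathcal{A}$.

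The main obstacle is completeness of the sufficiency construction: one must show that the collapse quotients together with the one-split quotients over progressions disjoint from $\mathcal{A}$ actually separate every nontrivial element, even though $\mathcal{A}$ need not be re and so $\mathcal{A}\bmod p$ is not computable. I would attack this by reducing, via the analysis behind part (2), to the single family of obstruction elements $u_n\hat{u}_n$ with $n\notin\mathcal{A}$: any $w$ whose image under the collapse map $\rho:L(\mathcal{A})\to L$ is nontrivial is caught by family (A), while the residually finite structure of the amalgam shows that the remaining nontrivial elements are detected, after passing to a suitable $L_p$, by finitely many split residues, each of which, being forced to avoid $\mathcal{A}$ by residual finiteness, is produced by family (B) through effective closedness. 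Pinning down this last reduction, so that the finitely many residues needed for a given $w$ are exactly those certified disjoint from $\mathcal{A}$, is the technical heart of the argument.
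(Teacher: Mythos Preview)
Your necessity argument is essentially identical to the paper's: feed $u_n\hat u_n^{-1}$ (equal to your $u_n\hat u_n$ since $\hat u_n^2=e$) to the separation algorithm and read off a modulus from the orders of $\varphi(a)$ and $\varphi(\hat a)$. The paper takes the product of these orders where you take the lcm; either works.

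For sufficiency the paper gives no argument: it simply asserts that the proof of (2) in \cite{Rauzy2020} can be carried out effectively under the extra hypothesis, and remarks that only necessity is used later. Your sketch is thus more detailed than what the paper offers, but, as you yourself flag, it is incomplete, and the explicit families you name are too small. Concretely, take $\mathcal{A}=\mathbb{Z}$, which is trivially co-re and vacuously effectively closed. Then no one-split quotient exists, since no residue class misses $\mathcal{A}$, while the collapse map kills $[a,\hat a]$; hence your dovetailed search (A)$+$(B) never halts on the nontrivial element $[a,\hat a]\in L(\mathbb{Z})$. The repair is to enumerate, for each $p$, \emph{all} certified amalgam quotients $L_p*_{C_S}\hat L_p$, where $S\subseteq\mathbb{Z}/p\mathbb{Z}$ is any set such that every $r\notin S$ has been certified (via effective closedness applied to some $n\equiv r\pmod p$ produced by the co-re enumeration) to satisfy $(r+p\mathbb{Z})\cap\mathcal{A}=\emptyset$; this includes $S=\mathbb{Z}/p\mathbb{Z}$, which requires no certification and already detects $[a,\hat a]$. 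Relatedly, your proposed reduction of $\ker\rho$ to the obstruction elements $u_n\hat u_n$ is not literally correct, since $[a,\hat a]\in\ker\rho$ involves no lamp at all. What one actually does is run the normal-form argument behind (2): the finitely many indices $j\notin\mathcal{A}$ appearing in a reduced expression for $w$ are each certified, the resulting moduli are combined into a single $P$, and the reduced form survives in the certified quotient $L_P*_{C_S}\hat L_P$.
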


(1) and (2) were proved by Dyson in \cite{Dyson1974}. See also \cite{Rauzy2020}.
We prove here only the third point. 
\begin{proof}
Suppose first that $L(\mathcal{A})$ is effectively residually finite.
Then $L(\mathcal{A})$ is co-re and, by (1), $\mathcal{A}$ is co-re
as well. Let $x$ be a integer in the complement of $\mathcal{A}$,
thus such that $u_{x}\hat{u}_{x}^{-1}\neq1$ in $L(\mathcal{A})$.
By our hypothesis, we can effectively find a morphism $\varphi$ from
$L(\mathcal{A})$ to a finite group $F$, with $\varphi(u_{x}\hat{u}_{x}^{-1})\neq1$
in $F$. Call $N$ the product of the orders of the images of $a$
and $\hat{a}$ in $F$. We then claim that $\mathcal{A\cap}(x+N\mathbb{Z})=\varnothing$.
Indeed, if it were not the case, there would exist an integer $k$
such that $x+kN\in\mathcal{A}$, that is to say, such that $u_{x+kN}=\hat{u}_{x+kN}$
in $L(\mathcal{A})$. But then, this would imply: 
\begin{align*}
\varphi(u_{x}) & =\varphi(a)^{kN}\varphi(u_{x})\varphi(a)^{-kN}=\varphi(a^{kN}u_{x}a^{-kN})\\
 & =\varphi(u_{x+kN})=\varphi(\hat{u}_{x+kN})
\end{align*}
Because it can similarly be proved that $\varphi(\hat{u}_{x})=\varphi(\hat{u}_{x+kN})$,
this contradicts the assumption that $\varphi(u_{x}\hat{u}_{x}^{-1})\neq1$.
Thus $\mathcal{A}$ does not meet $x+N\mathbb{Z}$. 

The proof of the converse follows closely the proof of (2) given in
\cite{Rauzy2020} (which differs from the original proof of Dyson),
as one only needs to see that the hypothesis that $\mathcal{A}$ is
effectively closed in $\mathcal{PT}(\mathbb{Z})$ is enough to effectively
carry out that proof. As we actually only use the first implication
of (3) in the proof of Theorem \ref{thm:Main-THM}, no further details
are given here. 
\end{proof}
From Proposition \ref{prop:MainProp}, to prove Theorem \ref{thm:Main-THM},
it suffices to build $\mathcal{A}$ with the following properties:
$\mathcal{A}$ is recursive, $\mathcal{A}$ is closed, but not effectively
so.

\section{Construction in $\mathbb{Z}$}
\begin{lem}
\label{lem:MainLemma}There exists a recursive subset $\mathcal{A}$
of $\mathbb{Z}$, closed in $\mathcal{PT}(\mathbb{Z})$, but not effectively
so. 
\end{lem}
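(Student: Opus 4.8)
The plan is to reduce the failure of effective closedness to the non-recursiveness of the halting set $K=\{e:T_{e}(e)\text{ halts}\}$, by engineering $\mathcal{A}$ so that, along a recursive sequence of test points, the radius needed to escape $\mathcal{A}$ encodes the halting time $h(e)$ of $T_{e}(e)$. Fix an enumeration $(T_{e})$ of Turing machines; recall that $K$ is recursively enumerable but not recursive, and that there is no recursive $f$ with $h(e)\le f(e)$ for all $e\in K$ (such an $f$ would decide $K$ by simulating $f(e)$ steps). The elementary observation driving the construction is that any modulus $p$ witnessing $x+p\mathbb{Z}\subseteq\mathcal{A}^{c}$ must be large as soon as $\mathcal{A}$ contains a point profinitely close to $x$: if $b\in\mathcal{A}$ satisfies $\theta(h)\mid(b-x)$ while $x+p\mathbb{Z}$ avoids $b$, then $p\nmid\theta(h)$, so $p$ has a prime-power factor exceeding $h$, hence $p>h$.

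First I would lay down a recursive grid of candidate points $y_{e,t}$ ($e,t\ge 0$), pairwise distinct, together with recursive test points $x_{e}$ lying off the grid, arranged so that $y_{e,t}\equiv x_{e}\pmod{\theta(t)}$ --- so $y_{e,t}$ sits within profinite distance $1/t$ of $x_{e}$. I then declare $y_{e,t}\in\mathcal{A}$ exactly when $T_{e}(e)$ halts at time precisely $t$, and put nothing else in $\mathcal{A}$. Crucially this makes $\mathcal{A}$ recursive: to test whether a given integer $z$ lies in $\mathcal{A}$ one decodes the unique pair $(e,t)$ with $z=y_{e,t}$ (if none, $z\notin\mathcal{A}$) and simulates $T_{e}(e)$ for exactly $t$ steps, a bounded computation. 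Thus $\mathcal{A}=\{b_{e}:e\in K\}$, where $b_{e}:=y_{e,h(e)}$ is the single surviving point of column $e$, and by construction $\theta(h(e))\mid(b_{e}-x_{e})$, while each $x_{e}$ lies in $\mathcal{A}^{c}$.

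With this in hand the reduction is immediate. Suppose $\mathcal{A}$ were effectively closed, witnessed by a procedure $\psi$ producing, for each $n\in\mathcal{A}^{c}$, a modulus with $n+\psi(n)\mathbb{Z}\subseteq\mathcal{A}^{c}$. Applying $\psi$ to the (recursively presented) points $x_{e}$, the function $f(e):=\psi(x_{e})$ is total recursive, since every $x_{e}$ lies in $\mathcal{A}^{c}$; and for $e\in K$ the progression $x_{e}+f(e)\mathbb{Z}$ avoids $b_{e}\in\mathcal{A}$, so by the observation above $f(e)>h(e)$. But then $K$ is decided by simulating $T_{e}(e)$ for $f(e)$ steps, contradicting that $K$ is not recursive. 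Hence $\mathcal{A}$ is not effectively closed. The same estimate shows the escape modulus at $x_{e}$ exceeds $h(e)$, which, as $h$ admits no recursive bound on $K$, is what will later yield a depth function growing faster than any recursive function.

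The step I expect to be the real obstacle is verifying that $\mathcal{A}$ is genuinely closed, that is, that the blockers $\{b_{e}\}$ have no accumulation point inside $\mathbb{Z}$ for $\mathcal{PT}(\mathbb{Z})$ --- for only then is the complement open and each $x_{e}$ actually interior to it. Note that one may not hope for the blockers to be uniformly profinitely separated, since there are only $\theta(n)$ residues at scale $1/n$; pigeonhole forces infinitely many columns to crowd together, and the delicate content is to steer every resulting profinite limit off $\mathbb{Z}$. I would secure this by choosing the test points $(x_{e})$ and each grid row $(y_{e,t})_{e}$ (with $t$ fixed) to have no integer profinite limit point, and by separating the columns $p$-adically along distinct primes, so that any integer limit of $\{b_{e}\}$ would force either a row or the sequence $(x_{e})$ to accumulate at that integer. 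Checking this bookkeeping, rather than the halting-set reduction, is where the care lies; the conceptual crux --- reconciling recursiveness of $\mathcal{A}$ with escape data that no recursive function can bound --- is already resolved by the time-bounded membership test above.
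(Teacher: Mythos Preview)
Your strategy is sound and yields a correct proof, but it is organized dually to the paper's. The paper builds the \emph{complement} $\mathcal{B}=\mathcal{A}^{c}$ directly as a disjoint union of open neighbourhoods $X_n$ of test points $t_n=p_1\cdots p_n$: as the $n$-th machine runs, successive open balls are adjoined to $X_n$, so that if the machine never halts $X_n$ fills out the whole ball $\overline{B}(t_n,1/t_{n+1})$, while if it halts $X_n$ is a finite union of balls leaving a controlled gap around $t_n$. In that arrangement openness of $\mathcal{B}$ (hence closedness of $\mathcal{A}$) is automatic, and the work goes into recursiveness and the halting-problem reduction. Your construction reverses the trade-off: taking $\mathcal{A}$ to be the sparse set of single blockers $b_e=y_{e,h(e)}$ makes the membership test immediate via bounded simulation, but shifts the burden to showing that $\{b_e\}$ has no profinite accumulation point in $\mathbb{Z}$. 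Your dichotomy for this is correct---if $b_{e_i}\to z$ then either infinitely many $h(e_i)$ coincide, forcing a fixed row to accumulate at $z$, or $h(e_i)\to\infty$, and the ultrametric inequality together with $d(b_{e_i},x_{e_i})\le 1/h(e_i)$ forces $x_{e_i}\to z$---but you still owe an explicit recursive grid with discrete rows and test points; this is feasible (for instance by confining column $e$ to $\overline{B}(p_1\cdots p_e,\,1/(p_1\cdots p_{e+1}))$, which is exactly the device the paper uses to separate its $X_n$) though not entirely trivial. The paper's route buys closedness for free at the cost of a slightly fiddlier recursiveness check; yours buys a cleaner recursiveness argument and a more transparent lower bound $\psi(x_e)>h(e)$ on the escape modulus, at the cost of the accumulation-point analysis you rightly flag as the real obstacle.
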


\begin{proof}
We construct a set $\mathcal{B}$, which will be the complement of
the announced $\mathcal{A}$. Thus it has to be recursive and open
but not effectively open.

Call $p_{n}$ the $n$-th prime number. Define a sequence $(t_{n})_{n\geq0}$
by $t_{n}=p_{1}...p_{n}$. This sequence is defined so that $p_{k}$
divides $t_{n}$ if and only if $k\geq n$. Note also that $t_{n}$
divides $t_{n+1}$. 

Consider an effective enumeration of all Turing machines: $M_{1}$
is the first machine, $M_{2}$ is the second... We will build $\mathcal{B}$
as a disjoint union of open sets $X_{n}$, each $X_{n}$ being a neighborhood
of $t_{n}$ defined thanks to a run of the $n$-th Turing machine
$M_{n}$. If this machine does not halt, $X_{n}$ is a closed ball
centered at $t_{n}$ of radius $\frac{1}{t_{n+1}}$. If it halts,
it is a finite union of balls, one of which is centered at $t_{n}$,
the radius of which depends of the number of steps needed for $M_{n}$
to halt. Thanks to this, an information of the form ``$X_{n}$ contains
a ball of radius $r$ centered in $t_{n}$'' will translate in ``if
$M_{n}$ halts, it does so in less than $N$ steps'', where $N$
can be computed from $r$. 

Initialize $X_{n}=\left\{ t_{n}\right\} $. Call $r_{n}=\frac{1}{t_{n+1}}$
and $m=\theta(t_{n+1})$, so that the closed ball $\overline{B}(t_{n},r_{n})$
is the set $t_{n}+m\mathbb{Z}$. 

Start a run of the machine $M_{n}$. 

After each step of computation of $M_{n}$, note $k$ the number of
steps already done in the computation, and add, to $X_{n}$, $t_{n}+km$,
$t_{n}-km$, as well as open balls centered in those numbers, that
are contained in $\overline{B}(t_{n},r_{n})$, and do not contain
$t_{n}$. That is, we replace $X_{n}$ by:
\[
X_{n}\cup B(t_{n}+km,\frac{1}{2}d(t_{n},t_{n}+km))\cup B(t_{n}-km,\frac{1}{2}d(t_{n},t_{n}-km))
\]
Because the distance $d$ is ultrametric, both balls $B(t_{n}+km,\frac{1}{2}d(t_{n},t_{n}+km))$
and $B(t_{n}-km,\frac{1}{2}d(t_{n},t_{n}-km))$ are contained in $\overline{B}(t_{n},r_{n})$.

If, at some point, the machine $M_{n}$ halts, $X_{n}$ consists of
$t_{n}$ and of finitely many open balls centered at points $t_{n}\pm km$.
By construction, a point from $X_{n}\setminus\left\{ t_{n}\right\} $
is at distance at least $\underset{k}{\inf}\left\{ \frac{1}{2}d(t_{n},t_{n}\pm km)\right\} $
from $t_{n}$. This infimum can be computed, call it $r$. Then, compute
the smallest natural number $y$ such that $d(t_{n},y)<r$, and call
$r'$ the distance $d(t_{n},y)$. We then add to $X_{n}$ the ball
$B(u_{n},r')$. This implies that $X_{n}$ cannot contain any ball
of center $t_{n}$ and of radius strictly greater then $r'$, because
it does not contain $y$. In particular, any ball centered in $t_{n}$
that contains one of the elements of the form $t_{n}\pm km$ that
were added to $X_{n}$ is not contained in $X_{n}$. 

Of course, if the machine $M_{n}$ does not halt, $X_{n}$ will be
the whole ball $\overline{B}(t_{n},r_{n})$. 

This ends the definition of $X_{n}$, and $\mathcal{B}$ is defined
as the union $\bigcup X_{n}$. This union is disjoint because, by
the choice of the radius $r_{n}$, any element of $X_{n}$ is divisible
by $p_{n}$, but none is divisible by $p_{n+1}$. We now prove that
$\mathcal{B}$ defined this way satisfies all three properties that
appear in the statement of this Lemma. 

$\mathcal{B}$ is clearly open, because each $X_{n}$ is open, whether
or not the machine $M_{n}$ halts. 

$\mathcal{B}$ is a recursive set. It is obviously recursively enumerable,
because it was defined by an effective enumeration. To see that it
is also co-re, let $x$ be an integer, we want to decide whether $x$
belongs to $\mathcal{B}$. By looking at the prime decomposition of
$x$, one can find up to one $n$ such that $x$ might belong to $X_{n}$.
Because $X_{n}$ is always contained in $\overline{B}(t_{n},r_{n})$,
if $d(t_{n},x)>r_{n}$, $x$ cannot be in $X_{n}$. Otherwise, it
belongs to $\overline{B}(t_{n},r_{n})=t_{n}+m\mathbb{Z}$, and we
can find $k$ such that $x=t_{n}+km$. Then, if a run of $M_{n}$
lasts more then $k$ steps, automatically $x$ will belong to $X_{n}$.
On the other hand, if $M_{n}$ stops in less than $k$ steps, $X_{n}$
can be determined explicitly as a finite union of open balls, and
thus the question of whether $x$ belongs to $X_{n}$ can be settled.
Because the problem ``does $M_{n}$ halt in more then $k$ steps''
is a computable one, in either case we will be able to determine whether
or not $x$ belongs to $\mathcal{B}$. 

Finally, $\mathcal{B}$ is not effectively open. Suppose we have an
algorithm that gives, for $x$ in $\mathcal{B}$, an integer $k$
such that $x+k\mathbb{Z}$ is contained in $\mathcal{B}$. Applying
it to $t_{n}$, we can find a radius $r$ such that $B(t_{n},r)$
is contained in $\mathcal{B}$. We will show that this information
implies a new information of the form: if the machine $M_{n}$ halts,
then it halts in less than $N$ steps. This would of course allow
one to solve the halting problem, thus such an algorithm does not
exist. 

Indeed, we have seen that if $M_{n}$ halts in $N$ steps, $X_{n}$
cannot contain any ball centered in $t_{n}$ that contains an element
of the form $t_{n}\pm km$, with $k\leq N$. Turning this around,
computing $N$ such that $t_{n}+mN$ belongs to $B(t_{n},r)$, (for
instance $N=\theta(\left\lceil \frac{1}{r}\right\rceil +1)$), the
information ``$B(t_{n},r)$ is contained in $\mathcal{B}$'' implies
that either $M_{n}$ does not halt, or it halts in less then $N$
steps. 

This ends the proof of Lemma \ref{lem:MainLemma}.
\end{proof}
What we do not know yet is whether the condition of being recursively
presented and effectively residually finite, which is necessary to
be a subgroup of a finitely presented residually finite group, is
also sufficient. For instance, the strictly stronger condition of
having computable finite quotients is not known to be unnecessary
(see \cite{Rauzy2020}). 

\section{Non-recursive depth function}

In \cite{Bou-Rabee2010}, Bou-Rabee introduced the\emph{ residual
finiteness growth function}, or \emph{depth function}, $\rho_{G}$,
of a residually finite group $G$. To a natural number $n$, $\rho_{G}$
associates the smallest number $k$ such that, for any non-trivial
element of length at most $n$ in $G$, there exists a finite quotient
of $G$ of order at most $k$, such that the image of this element
in that quotient is non-trivial.

A finitely generated subgroup $H$ of a finitely generated residually
finite group $G$ must have a depth function $\rho_{H}$ that grows
slower than that of $G$ (see \cite{Bou-Rabee2010}). Because it is
easy to see that a finitely presented residually finite group always
has a recursive growth function, a subgroup of a finitely presented
residually finite group must have its depth function bounded above
by a recursive function. Note that in \cite{Kharlampovich2017}, it
was shown that finitely presented residually finite groups can have
arbitrarily large recursive depth function. It was also known that
the depth function of a residually finite group could grow arbitrarily
fast (see \cite{Bou-Rabee2016}).

Note that an effectively residually finite group with solvable word
problem always has its depth function bounded above by a recursive
function. 

We now show:
\begin{prop}
Let $\mathcal{A}$ be the subset of $\mathbb{Z}$ given in Lemma \ref{lem:MainLemma}.
The depth function $\rho_{L(\mathcal{A})}$ of $L(\mathcal{A})$ cannot
be smaller than a recursive function. 
\end{prop}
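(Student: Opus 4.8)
The plan is to argue by contradiction. Suppose there were a recursive function $f$ with $\rho_{L(\mathcal{A})}(n)\le f(n)$ for all $n$; I would show that this forces the complement $\mathcal{B}=\mathbb{Z}\setminus\mathcal{A}$ to be effectively open. Since by Lemma \ref{lem:MainLemma} the set $\mathcal{A}$ is closed but not effectively so, $\mathcal{B}$ is open but \emph{not} effectively open, and this is the contradiction. The heart of the matter is that a bound on the depth function furnishes, for each non-trivial element, a \emph{computable upper bound} on the order of a finite quotient detecting it, and, via the computation already carried out in the proof of Proposition \ref{prop:MainProp}(3), such an order bound translates directly into a computable profinite neighborhood of a point of $\mathcal{B}$ lying inside $\mathcal{B}$.

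In detail, given $x\in\mathcal{B}$, that is $x\notin\mathcal{A}$, the element $g=u_{x}\hat{u}_{x}^{-1}$ is non-trivial in $L(\mathcal{A})$, exactly as in the proof of Proposition \ref{prop:MainProp}(3). Writing $u_{x}=a^{x}\varepsilon a^{-x}$ and $\hat{u}_{x}=\hat{a}^{x}\hat{\varepsilon}\hat{a}^{-x}$, the word length of $g$ in the generators $a,\hat{a},\varepsilon,\hat{\varepsilon}$ is at most $n(x):=4\lvert x\rvert+2$, a quantity computable from $x$. By the assumed bound, since $g\neq1$ has length at most $n(x)$, there exists a morphism $\varphi$ from $L(\mathcal{A})$ onto a finite group $F$ with $\lvert F\rvert\le f(n(x))=:k$ and $\varphi(g)\neq1$.

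Now I would invoke the computation from Proposition \ref{prop:MainProp}(3) essentially verbatim: setting $N$ to be the product of the orders of $\varphi(a)$ and $\varphi(\hat{a})$, the detection $\varphi(u_{x}\hat{u}_{x}^{-1})\neq1$ forces $\mathcal{A}\cap(x+N\mathbb{Z})=\varnothing$. The key quantitative addition is that $N=\lvert\varphi(a)\rvert\cdot\lvert\varphi(\hat{a})\rvert\le\lvert F\rvert^{2}\le k^{2}$, so $N$ divides $\theta(k^{2})=\mathrm{lcm}\{1,\dots,k^{2}\}$; consequently $x+\theta(k^{2})\mathbb{Z}\subseteq x+N\mathbb{Z}$ is disjoint from $\mathcal{A}$, i.e.\ contained in $\mathcal{B}$. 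Thus the integer $p:=\theta\bigl(f(n(x))^{2}\bigr)$, computable from $x$ alone, witnesses $x+p\mathbb{Z}\subseteq\mathcal{B}$.

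The crucial and slightly delicate point—the step I expect to be the real content rather than bookkeeping—is that this procedure never needs to \emph{find} the morphism $\varphi$; it uses only its existence and the a priori bound $\lvert F\rvert\le k$. Actually producing $\varphi$ would amount to effective residual finiteness, which is precisely what $L(\mathcal{A})$ fails. But because the passage from ``there is a detecting quotient of order at most $k$'' to ``the ball $\overline{B}(x,k^{-2})=x+\theta(k^{2})\mathbb{Z}$ avoids $\mathcal{A}$'' is monotone and depends only on the computable bound $k$, the computation of $p$ goes through unconditionally. Hence $x\mapsto p$ exhibits $\mathcal{B}$ as effectively open, contradicting Lemma \ref{lem:MainLemma}. (If the bound $\rho_{L(\mathcal{A})}(n)\le f(n)$ is only assumed for large $n$, one first hard-codes correct radii for the finitely many remaining $x\in\mathcal{B}$, which exist because $\mathcal{B}$ is open; this finite modification preserves effectiveness.)
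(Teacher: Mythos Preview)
Your proof is correct and follows essentially the same approach as the paper's: assume a recursive bound $f$ on $\rho_{L(\mathcal{A})}$, apply it to the witness element $u_{x}\hat{u}_{x}^{-1}$ for $x\notin\mathcal{A}$, and use the order bound on the detecting quotient (via the argument of Proposition~\ref{prop:MainProp}(3)) to produce a computable $p$ with $x+p\mathbb{Z}\subseteq\mathcal{B}$, contradicting Lemma~\ref{lem:MainLemma}. The only cosmetic difference is the choice of computable period: the paper takes $p=f(4n+2)!$ (since each of the orders of $\varphi(a)$, $\varphi(\hat{a})$ divides $\lvert F\rvert!$), whereas you take $p=\theta(f(4\lvert x\rvert+2)^{2})$; both work for the same reason.
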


\begin{proof}
Suppose there exists a recursive function $f$ such that $\rho_{L(\mathcal{A})}\leq f$.
Then, for each $n$ which is not in $\mathcal{A}$, $u_{n}\hat{u}_{n}^{-1}$
is a non-identity element of $L(\mathcal{A})$, and thus it has a
non-trivial image in a finite quotient $F$ of size at most $f(4n+2)$
(because $u_{n}$ and $\hat{u}_{n}$ are of word length $2n+1$).
Now the orders of $a$ and $\hat{a}$ in $F$ both divide $f(4n+2)!$,
thus $F$ is a quotient of the quotient of $L(\mathcal{A})$ given
by adding to it the two relations $a^{f(4n+2)!}=e$ and $\hat{a}^{f(4n+2)!}=e$
(i.e. $\langle L(\mathcal{A})\,\vert\,a^{f(4n+2)!},\,\hat{a}^{f(4n+2)!}\rangle$).
Thus, in this group as well, $u_{n}\hat{u}_{n}^{-1}$ is a non-identity
element, and we know that this implies that $n+f(4n+2)!\mathbb{Z}$
does not meet $\mathcal{A}$ (see the proof of Proposition \ref{prop:MainProp}).
This shows that $\mathcal{A}$ is effectively closed, contradicting
our hypothesis. 
\end{proof}
\bibliographystyle{abbrv}
\bibliography{HigmanObstruction.bib}

\end{document}